\documentclass[a4paper,11pt]{article}
\usepackage[utf8]{inputenc}
\usepackage[T1]{fontenc}
\usepackage{lmodern}
\usepackage{microtype}
\usepackage[a4paper,textwidth=16cm,textheight=22cm,centering]{geometry}

\usepackage{mathtools}
\usepackage{amssymb,amsfonts}
\usepackage{amsthm}

\usepackage{setspace}
\usepackage{indentfirst}

\usepackage{cite}
\usepackage{xcolor}
\usepackage[ ]{hyperref}
\hypersetup{
	colorlinks=true,   
	linktoc=all,        
	linkcolor=red,     
	citecolor=blue,} 

\theoremstyle{plain}
\newtheorem{theorem}{Theorem}[section]
\newtheorem{lemma}[theorem]{Lemma}
\newtheorem{claim}[theorem]{Claim}
\newtheorem{fact}[theorem]{Fact}

\newtheorem{conjecture}[theorem]{Conjecture}
\newtheorem{observation}[theorem]{Observation}

\theoremstyle{definition}

\theoremstyle{remark}

\begin{document}
	\date{}
	\begin{spacing}{1.03}
		\title{{A double-exponential lower bound for $r_4(5,n)$ 
        }}
\author{
Longma Du$^1$, \;\;\; Xinyu Hu$^2$, \;\;\; Ruilong Liu$^1$, \;\;\; Guanghui Wang$^1$}

\footnotetext[1]{School of Mathematics, Shandong University, Jinan 250100, P.~R.~China.
Emails: {\tt 202520303@mail.sdu.edu.cn} (L. Du),
{\tt liuruilong@mail.sdu.edu.cn} (R. Liu),
{\tt ghwang@sdu.edu.cn} (G. Wang).
Supported by the Natural Science Foundation of China (12231018) and State Key Laboratory of Cryptography and Digital Economy Security.}
\footnotetext[2]{Data Science Institute, Shandong University, Jinan 250100, P.~R.~China.
Email: {\tt huxinyu@sdu.edu.cn} (X. Hu).
Supported by National Postdoctoral Fellowship Program (C-tier) (GZC20252005).}

\maketitle	

\begin{abstract}
The Ramsey number $r_k(s,n)$ is the smallest integer $N$ such that every $N$-vertex $k$-graph contains either a copy of $K_s^{(k)}$ or an independent
set of size $n$. 
We prove that $r_4(5,n)\ge 2^{2^{cn^{1/7}}}$, where $c>0$ is an absolute constant.  
As a consequence, we  determine the tower growth rate of $r_k(k+1,n)$, which completely solves the problem of establishing the tower growth rate for all classical off-diagonal hypergraph
Ramsey numbers, first posed by Erd\H{o}s and Hajnal in 1972. 

\end{abstract}
	
\section{Introduction}
A $k$-uniform hypergraph $H$ ($k$-graph for short) is a pair consisting of a set of vertices $V(H)$ and a collection of $k$-element subsets of $V(H)$. Let $K_n^{(k)}$ be the complete $k$-graph on $n$ vertices. The off-diagonal Ramsey number $r_k(s,n)$ \cite{R}, which has been extensively studied when $k$ and $s$ are fixed, is the smallest integer $N$ such that  every $N$-vertex $k$-graph contains either a copy of $K_s^{(k)}$ or an independent
set of size $n$. 

Off-diagonal Ramsey numbers have been extensively studied since 1935 \cite{E-S-1}, with many classical results \cite{A-K-S-1,A-K-S-2,B-K-2,C-G-E,E-H-Con,E-R-2,K-1,L-R-Z,M-S-3,M-S-4,S-1}.
In recent years, there have been many breakthroughs regarding off-diagonal Ramsey numbers, especially in graphs \cite{C-J-M-S,C-F-S-3,H-H-K-P,H-M-S,M-S-X,M-V-2}. Indeed, there have been significant recent breakthroughs in diagonal Ramsey numbers as well (see, e.g., \cite{C-G-M-S,G-N-N-W}). This work, however, focuses on the off-diagonal case, and diagonal Ramsey numbers are therefore not covered.

Campos, Jenssen, Michelen and Sahasrabudhe \cite{C-J-M-S} showed that $r_2(3,n)\ge (\frac{1}{3}+o(1))\frac{n^2}{\log n}$, which improved the results of Fiz Pontiveros, Griffiths and Morris \cite{P-G-M}. 
Shortly afterwards, Hefty, Horn, King and Pfender \cite{H-H-K-P} improved the lower bound $r_2(3,n)\ge (\frac{1}{2}+o(1))\frac{n^2}{\log n}$. Generally, the previous best bounds \cite{A-K-S-1,B-K-2,L-R-Z,SP-1} for $r_2(s,n)$ with fixed $s\ge 3$ are $\tilde{\Omega}(n^{\frac{s+1}{2}})\le r_2(s,n)\le\tilde{O}(n^{s-1})$. A breakthrough result of Mattheus and Verstra\"{e}te \cite{M-V-2} showed that $r_2(4,n)\ge\Omega(\frac{n^3}{\log^4n})$. 
In particular, Erd\H{o}s \cite{C-G-E} conjectured that $r_2(s,n)= {\tilde{\Theta}}(n^{s-1})$, where $\tilde{O}$, $\tilde{\Omega}$, and $\tilde{\Theta}$ suppress factors of the form $(\log n)^{\Theta(1)}$. In another recent development, Ma, Shen and Xie \cite{M-S-X} obtained the first exponential improvement for the lower bound of the off-diagonal Ramsey number $r_2(n,Cn)$ with constant $C>1$.

For $3$-graphs, Conlon, Fox and Sudakov \cite{C-F-S-3} obtained that for every $s\ge4$, $2^{\Omega(n\log n)}\le r_3(s,n)\le 2^{O(n^{s-2}\log n)}$, which improved the upper bound by Erd\H{o}s and Rado \cite{E-R-2} and the lower bound by
Erd\H{o}s and Hajnal \cite{E-H-Con}. 

For $s>k\ge 4$, it is known that $r_k(s,n)<\operatorname{twr}_{k-1} (n^{O(1)})$ \cite{E-R-2}, where the tower function $\operatorname{twr}_{k}(x)$ is defined by $\operatorname{twr}_{1}(x)=x$ and $\operatorname{twr}_{i+1}(x)=2^{\operatorname{twr}_{i}(x)}$. By applying the Erd\H{o}s-Hajnal stepping-up lemma, it follows that $r_k(s,n)\ge \operatorname{twr}_{k-1} (\Omega(n))$ for $k\ge 4$ and for all $s\ge 2^{k-1}-k+3$. A fundamental and important conjecture about $r_k(s,n)$ was proposed by Erd\H{o}s and Hajnal \cite{E-H-Con}.
\begin{conjecture}[Erd\H{o}s and Hajnal \cite{E-H-Con}]\label{E-H-C}
    Fix $4\le k<s$. It holds that
$r_k(s,n)\ge \operatorname{twr}_{k-1}(\Omega(n)).$
\end{conjecture} 
Erd\H{o}s and Hajnal (see \cite{G-R-S-1}) 
showed that $r_4(7,n)\ge 2^{2^{\Omega(n)}}$. In \cite{C-F-S-1}, Conlon, Fox and Sudakov modified the Erd\H{o}s-Hajnal stepping-up lemma to show that Conjecture \ref{E-H-C} holds for all $s\ge \lceil\frac{5k}{2}\rceil-3$. Mubayi and Suk \cite{M-S-4} as well as Conlon, Fox and Sudakov, independently verified Conjecture \ref{E-H-C} for $k\ge 4$ and $s\ge k+3$. Mubayi and Suk \cite{M-S-3} established the lower bounds $r_k(k+1,n)\ge \operatorname{twr}_{k-2} (n^{\Omega(\log n)})$ and $r_k(k+2,n)\ge \operatorname{twr}_{k-1} ({\Omega(n^{1/5})})$.  It remains an open problem to prove $r_k(k+1,n)\ge \operatorname{twr}_{k-1} (n^{\Omega(1)})$. 
In fact, this problem reduces to proving a double-exponential lower bound for $r_4(5,n)$ by a variant of the classical Erd\H{o}s-Hajnal stepping-up lemma\cite{M-S-3}.
\medskip

In this paper, we obtain a double-exponential lower bound for $r_4(5,n)$.

\begin{theorem}\label{center-2}
     For all $n\ge 5$, we have $r_4(5,n)\ge 2^{2^{cn^{1/7}}}$, where $c>0$ is an absolute constant.
\end{theorem}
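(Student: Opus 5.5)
We will prove the bound by a stepping-up construction. We start from a random $3$-graph on $N=2^{m}$ vertices, with $m\approx cn^{1/7}$, and lift it to a $4$-graph on $2^N$ vertices. The lift is built so that it has no $K_5^{(4)}$ and no independent set of size $n$.

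\textbf{Base structure.} Identify $V=\{0,1\}^N$, ordered lexicographically. For $a<b$ let $\delta(a,b)$ be the first coordinate where $a$ and $b$ differ. For $a_1<\dots<a_t$ write $\delta_i=\delta(a_i,a_{i+1})$. Recall the standard facts:
\begin{itemize}
\item consecutive $\delta_i$ are distinct;
\item $\delta(a_1,a_t)=\min_i\delta_i$;
\item a monotone segment of the $\delta$-sequence, say $\delta_1<\delta_2<\dots$, yields a set whose $\delta$-structure mimics the original.
\end{itemize}

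On the index set $[N]$ we take a $3$-graph $\Phi$ with two properties. First, it has no independent set of size roughly $n^{\Theta(1)}$. Second, it avoids a small family of forbidden local configurations on $4$ or $5$ vertices; this plays the role that $K_4^{(3)}$-freeness plays in the usual Erd\H{o}s--Hajnal argument. Following the Mubayi--Suk approach for $r_k(k+2,n)$, we would build $\Phi$ from a random tournament or random $2$-colouring $\chi$ of the pairs of $[N]$. A triple $i<j<l$ is an edge iff $\chi(i,j),\chi(j,l),\chi(i,l)$ follow a prescribed non-transitive pattern. This pattern is chosen so that any five indices arising as a $\delta$-sequence of a $5$-set cannot realize all five $4$-subsets as edges. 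A random colouring has no independent set of $\Phi$ of size $s$ with probability $1-\binom{N}{s}2^{-\Omega(s^2)}$, which allows $N=2^{s^{\Omega(1)}}$.

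\textbf{Defining the $4$-graph.} For $a_1<a_2<a_3<a_4$ with $\delta$-sequence $(\delta_1,\delta_2,\delta_3)$, we set the edges by cases:
\begin{itemize}
\item If the sequence is monotone, $\{a_1,\dots,a_4\}$ is an edge iff $\{\delta_1,\delta_2,\delta_3\}\in\Phi$.
\item If it is a local extremum, such as $\delta_1<\delta_2>\delta_3$, the edge is decided by the colour $\chi(\delta_1,\delta_3)$ together with the comparison $\delta_1\lessgtr\delta_3$.
\end{itemize}
The point is that for a $5$-set $a_1<\dots<a_5$ with $\delta$-sequence $(\delta_1,\dots,\delta_4)$, each $4$-subset's $\delta$-sequence is obtained by deleting an end entry or merging two consecutive entries into their minimum. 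We then run a case analysis over the $\delta$-pattern types of the length-$4$ sequence: monotone, one peak, one valley, zigzag, and so on. In each case we show that some $4$-subset is a non-edge. This is exactly where $K_5$-freeness must come from, and the whole reason $r_4(5,n)$ is hard: a $5$-set has only $5$ quadruples, which leaves very little room.

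\textbf{Independence number.} Let $A$ be an independent set of size $n$. We extract a long monotone or structured $\delta$-subsequence using the standard Erd\H{o}s--Szekeres / ``tree of $\delta$'s'' argument. One of the following must then occur:
\begin{itemize}
\item a set of $\delta$-values of size about $n^{1/7}$ that is independent in $\Phi$, contradicting the choice of $\Phi$;
\item a long alternating configuration forcing an edge through the local rules.
\end{itemize}
The exponent $1/7$ comes from iterating the Erd\H{o}s--Szekeres and pigeonhole extraction a few times, each step taking a square or cube root. The precise count depends on how many nested extractions are needed, so we would track the exponent through each step.

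\textbf{Main obstacle.} The hardest step is designing the local rule and $\Phi$ so that both properties hold at once. The $K_5^{(4)}$-free case check must hold for every $\delta$-pattern, while the rule stays flexible enough that long independent sets still force large $\Phi$-independent sets. We would first try taking $\Phi$ as a random "cyclic-pattern" $3$-graph built from a random tournament. Then we would check the monotone-$\delta$ $5$-sets, which reduce to requiring that $\Phi$ contain no $K_4^{(3)}$-like pattern on consecutive indices. Next we would handle the peak/valley cases by choosing the $\chi$-dependent rule adversarially. We would adjust the rule to fix whichever case fails.
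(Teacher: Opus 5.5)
Your proposal is a plan rather than a proof, and the part it leaves open is the whole difficulty. The rule for non-monotone quadruples is never fixed (``we would adjust the rule to fix whichever case fails''), so neither $K_5^{(4)}$-freeness nor the independence bound is established.

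Moreover, the one concrete rule you suggest cannot work: deciding an extremal quadruple only by $\chi(\delta_1,\delta_3)$ and the comparison of $\delta_1$ with $\delta_3$. In your min-convention take $a_1<\dots<a_5$ with $\delta$-sequence $(\delta_1,\delta_2,\delta_3,\delta_4)$ and $\delta_1<\delta_4<\delta_3<\delta_2$; this is realizable since the values are distinct. Its five quadruples have $\delta$-sequences $(\delta_1,\delta_2,\delta_3)$, $(\delta_1,\delta_2,\delta_4)$, $(\delta_1,\delta_3,\delta_4)$ (twice) and $(\delta_2,\delta_3,\delta_4)$. These are four peaks with smaller left end and outer pairs $\{\delta_1,\delta_3\}$, $\{\delta_1,\delta_4\}$, plus one monotone triple. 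The clique conditions on the colours of these two outer pairs and on the three pairs inside $\{\delta_2,\delta_3,\delta_4\}$ do not interact. So a random $\chi$ produces a $K_5^{(4)}$ unless such peaks are never edges. The pattern $\delta_4<\delta_1<\delta_2<\delta_3$ does the same for peaks with larger left end, and $\delta_1>\delta_2<\delta_3<\delta_4$ (with $\delta_1>\delta_4$ or $\delta_1<\delta_3$) does it for valleys.

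Hence your rule collapses to the purely monotone lift, which has huge independent sets. Let $\delta(A)$ be the in-order listing of a binary tree with distinct labels increasing away from the root, all of whose ancestor--descendant pairs get one colour. Every monotone triple realized in $A$ is an ancestor--descendant chain. It is therefore monochromatic and not in $\Phi$ (for the paper's pattern $\chi(a,b)=\chi(b,c)\neq\chi(a,c)$), while every non-monotone quadruple has distinct outer values and is a non-edge. A random $\chi$ contains such trees of depth $\Theta(\log N)$ (embed greedily level by level), so $|A|=N^{\Omega(1)}\gg n$.

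Two ideas that drive the paper are missing. First, the extremal rule must involve the colours of pairs containing the middle entry, so that they collide with the monotone constraint. In the paper's max-convention, where your peaks become its valleys, peaks are never edges. A valley $\delta_1>\delta_2<\delta_3$ is an edge by (ii) if $\delta_1>\delta_3$ and $\phi(\delta_1,\delta_2)=\phi(\delta_1,\delta_3)\neq\phi(\delta_2,\delta_3)$, and by (iii) if $\delta_1<\delta_3$ and all three colours agree. Thus a $K_5^{(4)}$ must have exactly one interior valley, and Property III together with these shared colours rules out both possible positions. Rule (iii) is precisely what kills monochromatic trees like the one above.

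Second, the independence bound does not come from Erd\H{o}s--Szekeres extractions with repeated roots. By property~($\ast$), a consecutive monotone run of length $n$ in any layer realizes all its triples. So Lemma~\ref{no-mono-n} forces local maxima of density at least $1/(2n)$ in every layer. Passing to local maxima seven times, starting from $(2n)^7+1$ vertices, yields the nested indices $b_1,b_2,b_3,a$ and $c,d,e,f$; this is the source of $n^{1/7}$. A pigeonhole on $\phi(\delta_{b_i},\delta_a)$ together with Claim~\ref{ind-1} then forces a chain of colours ending in an edge of type (iii). Your ``long alternating configuration forcing an edge'' is exactly this step, and nothing in the proposal supplies it.
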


As a consequence, this yields the lower bound $r_k(k+1,n)\ge \operatorname{twr}_{k-1}(c n^{1/7})$ for  $n>k\ge 5$, where $c=c(k)>0$ is a constant. This completely solves the problem of establishing the tower growth rate for all classical off-diagonal hypergraph Ramsey numbers. 

\medskip

The main idea in our approach is to apply stepping-up starting from a graph to construct a $4$-graph, and our construction relies on analyzing multi-layer extremum structures. The new ingredient has already been successfully applied to the Erd\H{o}s-Rogers problem\cite{D-H-L-W-1,D-H-L-W-2} and we expect more applications in the future.

\section{Properties of the stepping-up technique}\label{sec2}

Fix a positive integer $D$ and let $V = \{0, 1, \ldots, 2^D - 1\}$. For each $v \in V$, write
$v = \sum_{i=0}^{D-1} v(i)2^i$, where  $v(i) \in \{0, 1\}$ for every $i$.
For distinct $u,v\in V$, let
$\delta(u,v)$ denote the largest $i\in\{0,1,\ldots,D-1\}$ such that
$u(i)\neq v(i)$.

We use $\langle v_1, v_2,\ldots, v_r\rangle$ to denote an ordered vertex set with $v_1< v_2<\cdots<v_r$. For an ordered vertex set $S=\langle v_1, v_2,\ldots, v_r\rangle$, we also write
$\delta(S)=\delta(v_1,\ldots,v_r)
=(\delta(v_1,v_2),\ldots,\delta(v_{r-1},v_r))=(\delta_1,\ldots,\delta_{r-1})=(\delta_i)_{i=1}^{r-1}$, where $\delta_i=\delta(v_i,v_{i+1})$ for $i\in[r-1]$. 
For convenience, if inequalities are known between consecutive $\delta$-values, this will be indicated in the sequence by replacing the comma with the respective sign. For instance, assume that $S=\langle v_1,\ldots, v_5\rangle$ and $\delta_1< \delta_2 >\delta_3< \delta_4$. Then, since $\delta(v_1,v_2,v_3,v_4)=(\delta_1,\delta_2,\delta_3)$ satisfies $\delta_1< \delta_2 >\delta_3$, we write
$\delta(v_1,v_2,v_3,v_4)=(\delta_1<\delta_2>\delta_3)$.
Similarly, if not all inequalities are known, as in $\delta(v_1,v_2,v_4,v_5)$, we write $\delta(v_1,v_2,v_4,v_5)=(\delta_1<\delta_2~,~\delta_4)$.

We say that $\delta_i$ is a \emph{local minimum} if
$\delta_{i-1}>\delta_i<\delta_{i+1}$, a \emph{local maximum} if $\delta_{i-1}<\delta_i>\delta_{i+1}$, and a \emph{local extremum} if it is either a local minimum or a local maximum. We call $\delta_i$ a \emph{local monotone} if $\delta_{i-1}<\delta_i<\delta_{i+1}$ or $\delta_{i-1}>\delta_i>\delta_{i+1}$. We say $\delta_1,\ldots,\delta_{r-1}$
form a monotone sequence if $\delta_1<\cdots<\delta_{r-1}$ (monotone increasing) or $\delta_1>\cdots>\delta_{r-1}$
(monotone decreasing), i.e., there is no local extremum.

We have the following stepping-up properties, see \cite{G-R-S-1}.\medskip

\textbf{Property I.} For every triple $u < v < w$, $\delta(u,v) \neq \delta(v,w)$.
\medskip

\textbf{Property II.} For $v_1 < \cdots < v_r$, $\delta(v_1,v_r) = \max_{1 \leq j \leq r-1} \delta(v_j,v_{j+1})$.
\medskip

\begin{fact}\label{fact}
  Let $(\delta_{i_j})_{j=1}^{s}\subset (\delta_t)_{t=1}^{r-1}$ and  $\delta_{i_j}\neq\delta_{i_{j+1}}$ for all $j$. If $(\delta_{i_j})_{j=1}^{s}$ is a non-monotone sequence, then it contains a local extremum.
\end{fact}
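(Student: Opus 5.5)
The argument is direct and uses only the definitions of monotone sequence and local extremum. Write the subsequence as $a_j=\delta_{i_j}$ for $j=1,\ldots,s$. By hypothesis consecutive terms are distinct, $a_j\neq a_{j+1}$, so for each $j\in[s-1]$ exactly one of $a_j<a_{j+1}$ or $a_j>a_{j+1}$ holds. Record this as a sign sequence $\epsilon_j\in\{+,-\}$, with $\epsilon_j=+$ precisely when $a_j<a_{j+1}$.

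If all the $\epsilon_j$ are equal, then $a_1<\cdots<a_s$ or $a_1>\cdots>a_s$, so $(a_j)$ is monotone. Since $(a_j)$ is assumed non-monotone, this cannot happen. In particular $s\ge 3$, because a sequence with $s\le 2$ distinct-consecutive terms is trivially monotone.

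The sign sequence is therefore not constant, so there is an index $j$ with $2\le j\le s-1$ and $\epsilon_{j-1}\neq\epsilon_j$. Take, say, the first index at which the sign changes. If $(\epsilon_{j-1},\epsilon_j)=(+,-)$, then $a_{j-1}<a_j>a_{j+1}$, and $a_j$ is a local maximum. If $(\epsilon_{j-1},\epsilon_j)=(-,+)$, then $a_{j-1}>a_j<a_{j+1}$, and $a_j$ is a local minimum. In either case $a_j=\delta_{i_j}$ is a local extremum of the sequence $(\delta_{i_j})_{j=1}^{s}$.

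There is essentially no obstacle here. The one point to make explicit is that "local extremum" is understood relative to the indexing of the subsequence $(\delta_{i_j})$, not of the original sequence $(\delta_t)$. The hypothesis $\delta_{i_j}\neq\delta_{i_{j+1}}$ is exactly what ensures every comparison between consecutive terms is strict. In applications this distinctness comes from Property I when the indices are consecutive, and from Property II together with Property I otherwise.
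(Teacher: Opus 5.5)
Your proof is correct, and it is the standard justification. The paper states this Fact without proof, treating it as immediate from its definition of a monotone sequence (``i.e., there is no local extremum''); your sign-change argument, with local extrema read relative to the subsequence's own indexing, is exactly what that reduction amounts to.

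One caveat concerns your closing remark, which is outside the proof itself. For non-consecutive indices, Properties I and II alone do not force $\delta_{i_j}\neq\delta_{i_{j+1}}$; Property III notes that $\delta_1=\delta_3<\delta_2$ can occur. In the paper this distinctness comes from property~($\ast$) of $\Delta^{(t-1)}$. Its proof uses Properties I and II only together with the structural condition that every intermediate $\delta_x$ is smaller than $\max\{\delta_a,\delta_b\}$.
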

\medskip

We also have the following properties from Properties I and II, see \cite{F-H-L-L,H-L-L-W-1,M-S-3}.
\medskip

\textbf{Property III.} For every 4-tuple $v_1 < \cdots < v_4$, if $\delta(v_1,v_2) > \delta(v_2,v_3)$, then $\delta(v_1,v_2) \neq \delta(v_3,v_4)$. Note that if $\delta(v_1,v_2) < \delta(v_2,v_3)$, it is possible that $\delta(v_1,v_2) = \delta(v_3,v_4)$.
\medskip

\textbf{Property IV.} For $v_1 < \cdots < v_r$, suppose that $\delta_1,\ldots,\delta_{r-1}$ form a monotone sequence. Then for every subset of $k$ vertices $v_{i_1} < \cdots < v_{i_k}$, $\delta(v_{i_1},v_{i_2}),\delta(v_{i_2},v_{i_3}),\ldots,\delta(v_{i_{k-1}},v_{i_k})$ form a monotone sequence. Moreover, for every subset of $k-1$ such $\delta_j$'s, i.e. $\delta_{j_1},\delta_{j_2},\ldots,\delta_{j_{k-1}}$, there are $k$ vertices $v_{i_1},\ldots,v_{i_k}$ such that $\delta(v_{i_t},v_{i_{t+1}}) = \delta_{j_t}$.
\medskip
\section{The lower bound for $r_4(5,n)$}

In this section, we begin by considering a graph coloring with specific properties, which will later be used to define the edge set of a 4-graph. 

\begin{lemma}\label{phi}
For every $n\ge 5$, there exists an absolute constant $c_0>0$ such that the following holds. There is a red/blue coloring $\phi$ of the pairs of $\{0,1,\ldots,\lfloor 2^{c_0n}\rfloor-1\}$ with the property that every $n$-set $A\subset \{0,1,\ldots,\lfloor 2^{c_0n}\rfloor-1\}$ contains a $3$-tuple $a_i<a_j<a_k$ satisfying $$\phi(a_i,a_j)=\phi(a_j,a_k)\ne\phi(a_i,a_k).$$
\end{lemma}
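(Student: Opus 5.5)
We prove Lemma~\ref{phi} by a random coloring and the union bound. Set $N=\lfloor 2^{c_0n}\rfloor$ and color every pair of $\{0,1,\ldots,N-1\}$ red or blue independently with probability $1/2$ each.

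Fix an $n$-set $A$. Call $A$ \emph{bad} if it contains no triple $a_i<a_j<a_k$ with $\phi(a_i,a_j)=\phi(a_j,a_k)\ne\phi(a_i,a_k)$. We want $\Pr[A\text{ bad}]\le 2^{-cn^2}$ for some absolute $c>0$.

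To get this bound, split $A=\{a_1<\cdots<a_n\}$ into $\lfloor n/3\rfloor$ consecutive triples $\{a_{3t-2},a_{3t-1},a_{3t}\}$. These triples are pairwise edge-disjoint, but there are only about $n/3$ of them, which gives only $(3/4)^{n/3}$. That is too weak, since we need to beat $\binom{N}{n}\approx 2^{c_0n^2}$. So we instead need about $n^2$ edge-disjoint triples. Such a family exists: there is a partial Steiner triple system on $[n]$ with $\Omega(n^2)$ pairwise edge-disjoint triples (for instance, a greedy choice gives at least $\binom n2/(3n)\cdot$ const; or use Kirkman/Steiner systems when $n\equiv 1,3 \pmod 6$ and delete a few points otherwise).

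For each such triple $\{x<y<z\}$, relabel it so that $a_i<a_j<a_k$ is its natural order. The event "pattern $\phi(a_i,a_j)=\phi(a_j,a_k)\ne\phi(a_i,a_k)$" holds with probability exactly $2/8=1/4$. These events are independent across edge-disjoint triples. Hence
\[
\Pr[A\text{ bad}]\le (3/4)^{c_1n^2}.
\]

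Taking a union bound over all $n$-subsets,
\[
\Pr[\exists \text{ bad } A]\le \binom{N}{n}(3/4)^{c_1n^2}\le 2^{c_0n^2}(3/4)^{c_1n^2}<1
\]
once $c_0< c_1\log_2(4/3)$. So some coloring $\phi$ has no bad $n$-set, which is exactly the statement of the lemma.

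For small $n$ (say $5\le n\le n_0$), $N\ge 2$ may be tiny. The statement still holds after shrinking $c_0$: choose $c_0$ so that $2^{c_0n}<n$ for the small cases, or more carefully ensure $N<n$. Then there is no $n$-set at all and the claim is vacuous.

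The main obstacle is the step producing $\Omega(n^2)$ edge-disjoint triples. We handle it with the greedy argument: each chosen triple blocks at most $3(n-2)$ other triples sharing an edge, out of $\binom n3$ total, so greedy selection yields at least $\binom n3/(3n)=\Omega(n^2)$ triples. The rest is routine.
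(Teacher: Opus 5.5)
Your proposal is correct and follows essentially the same route as the paper: a uniformly random coloring, a family of $\Omega(n^2)$ pairwise edge-disjoint triples in $A$ (a partial Steiner system) giving independent events and the bound $(3/4)^{c'n^2}$, then a union bound against $\binom{N}{n}\le 2^{c_0n^2}$. The only differences are that you justify the partial Steiner system by a greedy count instead of citing it, and your separate small-$n$ remark is unnecessary, since the union bound already works uniformly for all $n\ge 5$.
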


\noindent\textit{Proof of Lemma~\ref{phi}.~}Set $D=\lfloor 2^{c_0n}\rfloor$, where $c_0>0$ is a sufficiently small absolute constant to be determined later.
Consider a random red/blue coloring $\phi$ of the pairs of $\{0,1,\ldots,D-1\}$ in which each pair is colored independently and uniformly from the two colors.

We call a 3-tuple $a_i,a_j,a_k \in \{0,1,\ldots,D-1\}$ with $a_i < a_j < a_k $ \emph{good} if $\phi(a_i,a_j)=\phi(a_j,a_k)\ne\phi(a_i,a_k)$ and \emph{bad} otherwise.
For a fixed $3$-tuple, the probability that it is bad is $1-\frac{2}{2^3}=\frac{3}{4}$. 
Now fix an $n$-subset $A\subset \{0,1,\ldots,D-1\}$. We estimate the probability that $A$ contains no good $3$-tuple. Note that there exists a partial Steiner $(n,3,2)$-system $S$\footnote{That is, a $3$-graph on an $n$-vertex set with at least $c'n^2$ edges, for some constant $c'>0$, in which every pair of vertices is contained in at most one edge.}\cite{E-H-2}, on $A$ with at least $c'n^2$ edges. Since any two distinct $3$-tuples in $S$ share at most one vertex, they share no pair, and therefore the corresponding bad events are independent. It follows that the probability that all $3$-tuples in $S$ are bad is at most $\left(\frac{3}{4}\right)^{c'n^2}$. Hence, the probability that $A$ contains no good $3$-tuple is also at most $\left(\frac{3}{4}\right)^{c'n^2}$. 
Therefore, for a sufficiently small absolute constant $c_0>0$, the expected number of $n$-subsets $A$ containing no good $3$-tuple is at most $\binom{D}{n}\left(\frac{3}{4}\right)^{c'n^2}<\frac{1}{2}$. Thus, we conclude that there is a 2-coloring $\phi$ with the desired property. \hfill$\Box$
\medskip

Let $c_0>0$ be the constant from Lemma \ref{phi}, and let $U=\{0,1,\ldots,\lfloor2^{c_0n}\rfloor-1\}$ and $\phi:\binom{U}{2}\to \{\text{red},\text{blue}\}$ be a $2$-coloring of the pairs of $U$ satisfying the properties given in the lemma. Now, let $N=2^{\lfloor2^{c_0n}\rfloor}$ and $V(H)=\{0,1,\ldots,N-1\}$. Then we shall use the coloring
$\phi$ to produce a $K^{(4)}_5$-free $4$-graph $H$ on $V(H)$ with $\alpha(H)<2^7n^7+1$ as follows. For any $4$-tuple $e=\langle v_1,v_2,v_3,v_4 \rangle$ of $V(H)$, set $e\in E(H)$ if and only if one of the following holds:
\begin{enumerate}
\item[\textbf{(i)}] $\delta_1,\delta_2,\delta_3$ form a monotone sequence and $\phi(\delta_1,\delta_2)=\phi(\delta_2,\delta_3)\ne\phi(\delta_1,\delta_3)$.

\item[\textbf{(ii)}] $\delta_1>\delta_2<\delta_3$, $\delta_1>\delta_3$ and $\phi(\delta_1,\delta_2)=\phi(\delta_1,\delta_3)\ne\phi(\delta_2,\delta_3)$.
\item[\textbf{(iii)}] $\delta_1>\delta_2<\delta_3$, $\delta_1<\delta_3$ and $\phi(\delta_1,\delta_2)=\phi(\delta_1,\delta_3)=\phi(\delta_2,\delta_3)$.

\end{enumerate}

\subsection{$H$ is $K_5^{(4)}$-free }

In this subsection, we show that $H$ is $K_5^{(4)}$-free.
To see this, suppose to the contrary that $P = \langle v_1, \ldots, v_5 \rangle$ induces a $K_5^{(4)}$ in $H$. This will lead to a contradiction. Recall that $\delta(P)=(\delta_1,\delta_2,\delta_3,\delta_4)$.

\begin{claim}\label{no-mono}
$\delta(P)$ is not a monotone sequence.
\end{claim}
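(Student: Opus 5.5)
Suppose to the contrary that $\delta(P)=(\delta_1,\delta_2,\delta_3,\delta_4)$ is monotone, say $\delta_1<\delta_2<\delta_3<\delta_4$; the decreasing case is identical. By Property IV, every $4$-subset of $P$ has a monotone $\delta$-sequence. Hence each of the five $4$-tuples can be an edge of $H$ only through rule \textbf{(i)}. Property IV also tells us which $\delta$-values occur: for any three of $\delta_1,\ldots,\delta_4$ taken in increasing order of index, some $4$-subset of $P$ realizes exactly those three values as its consecutive $\delta$'s. Concretely, deleting $v_1$ gives $(\delta_2,\delta_3,\delta_4)$ and deleting $v_5$ gives $(\delta_1,\delta_2,\delta_3)$. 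Deleting $v_2$ gives $(\max(\delta_1,\delta_2),\delta_3,\delta_4)=(\delta_2,\delta_3,\delta_4)$ by Property II, so this tuple is not new. It is cleaner to select the tuples realizing $(\delta_1,\delta_3,\delta_4)$ and $(\delta_1,\delta_2,\delta_4)$, which exist by the second part of Property IV (deleting $v_4$ gives $(\delta_1,\delta_2,\delta_4)$).

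Write $\phi_{ij}=\phi(\delta_i,\delta_j)$. Rule \textbf{(i)} applied to the $4$-tuples realizing each triple gives a condition on every $3$-subset $\{a<b<c\}$ of $\{1,2,3,4\}$:
\[
\phi_{ab}=\phi_{bc}\ne\phi_{ac}.
\]
So the coloring $\phi$ restricted to the four values $\delta_1<\delta_2<\delta_3<\delta_4$ would have all four of its triples good. This is impossible, as the following short case check shows.

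From the triple $\{1,2,3\}$, $\phi_{12}=\phi_{23}=x$ and $\phi_{13}=\bar x$. From $\{2,3,4\}$, $\phi_{34}=\phi_{23}=x$ and $\phi_{24}=\bar x$. From $\{1,2,4\}$, $\phi_{12}=\phi_{24}$, i.e.\ $x=\bar x$, which is a contradiction. Only the three triples $\{1,2,3\},\{2,3,4\},\{1,2,4\}$ are needed.

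The only step requiring care is confirming that each of these three triples is realized by an actual $4$-subset of $P$. The realizations are: delete $v_5$ for $\{1,2,3\}$, delete $v_1$ for $\{2,3,4\}$, and delete $v_4$ for $\{1,2,4\}$, where Property II gives $\delta(v_3,v_5)=\max(\delta_3,\delta_4)=\delta_4$. In the decreasing case the roles reverse. The three tuples obtained by deleting $v_1$, $v_5$ and $v_2$ give $(\delta_2,\delta_3,\delta_4)$, $(\delta_1,\delta_2,\delta_3)$ and $(\delta_1,\delta_3,\delta_4)$, and rule \textbf{(i)} is symmetric under reversing the order of a triple. The same parity contradiction then follows from $\phi_{34}=\phi_{13}$ combined with the relations forced by the other two triples.
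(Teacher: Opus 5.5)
Your proof is correct and follows essentially the paper's argument: only rule \textbf{(i)} can apply, the $4$-tuples realizing $(\delta_1,\delta_2,\delta_3)$ and $(\delta_2,\delta_3,\delta_4)$ force $\phi_{12}=\phi_{23}=\phi_{34}\ne\phi_{13},\phi_{24}$, and a third realized triple then produces a color clash. The only difference is the choice of that third tuple. In the increasing case you use $\langle v_1,v_2,v_3,v_5\rangle$, giving $\phi_{12}=\phi_{24}$, where the paper uses $\langle v_1,v_2,v_4,v_5\rangle$, giving $\phi_{13}=\phi_{34}$. In the decreasing case you rightly note that deleting $v_4$ no longer yields $\delta_4$, and you switch to the triple $\{1,3,4\}$, as the paper does.
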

\noindent\textit{Proof of Claim \ref{no-mono}.~}Suppose, to the contrary, that $\delta(P)$ is increasing. The decreasing case is similar. Since $\delta(v_1,v_2,v_3,v_4)=(\delta_1<\delta_2<\delta_3)$ and \textbf{(i)}, we have $\phi(\delta_1,\delta_2)=\phi(\delta_2,\delta_3)\ne\phi(\delta_1,\delta_3)$. Similarly, since $\delta(v_2,v_3,v_4,v_5)=(\delta_2<\delta_3<\delta_4)$, we have $\phi(\delta_2,\delta_3)=\phi(\delta_3,\delta_4)$. Therefore, $(v_1,v_2,v_4,v_5)\notin E(H[P])$ since $\delta(v_1,v_2,v_4,v_5)=(\delta_1<\delta_3<\delta_4)$, and $\phi(\delta_3,\delta_4)\ne\phi(\delta_1,\delta_3)$ and \textbf{(i)}, a contradiction.\hfill$\Box$

It follows from Fact \ref{fact} and Claim \ref{no-mono} that there exists $i\in \{2,3\}$ such that $\delta_i$ is a local extremum. If $\delta_i$ is a local maximum, then we have $(v_{i-1},v_i,v_{i+1},v_{i+2})\notin E(H[P])$ since $\delta(v_{i-1},v_i,v_{i+1},v_{i+2})=(\delta_{i-1}<\delta_i>\delta_{i+1})$, a contradiction. Thus, we may assume that $\delta_i$ is a local minimum and we separate the proof into two cases as follows. 

\textbf{Case 1.~} $i=2$, that is $\delta(P)=(\delta_1>\delta_2<\delta_3<\delta_4)$. 

Since $(v_2,v_3,v_4,v_5)\in E(H[P])$ and $\delta(v_2,v_3,v_4,v_5)=(\delta_2<\delta_3<\delta_4)$, \textbf{(i)} gives $\phi(\delta_2,\delta_3)=\phi(\delta_3,\delta_4)\ne\phi(\delta_2,\delta_4)$. 
If $\delta_1<\delta_3$, then \textbf{(iii)} applied to $(v_1,v_2,v_3,v_5)$ gives $\phi(\delta_1,\delta_2)=\phi(\delta_2,\delta_4)$. Similarly, applying \textbf{(iii)} to $(v_1,v_2,v_3,v_4)$ gives $\phi(\delta_1,\delta_2)=\phi(\delta_2,\delta_3)$, contradicting $\phi(\delta_2,\delta_3)\ne\phi(\delta_2,\delta_4)$. 
If $\delta_1>\delta_4$, then \textbf{(ii)} applied to $(v_1,v_2,v_3,v_5)$ gives $\phi(\delta_1,\delta_2)\ne\phi(\delta_2,\delta_4)$. Similarly, applying \textbf{(ii)} to $(v_1,v_2,v_3,v_4)$ gives $\phi(\delta_1,\delta_2)\ne\phi(\delta_2,\delta_3)$. Since there are only two colors and $\phi(\delta_2,\delta_3)\ne\phi(\delta_2,\delta_4)$, this forces $\phi(\delta_1,\delta_2)=\phi(\delta_2,\delta_4)$, a contradiction. 
It follows from Property III that  $\delta_4>\delta_1>\delta_3$. Now $\delta(v_1,v_2,v_4,v_5)=(\delta_1>\delta_3<\delta_4)$ and $\delta_1<\delta_4$, so \textbf{(iii)} gives $\phi(\delta_1,\delta_3)=\phi(\delta_3,\delta_4)$. Since $\phi(\delta_3,\delta_4)=\phi(\delta_2,\delta_3)$, we get $\phi(\delta_1,\delta_3)=\phi(\delta_2,\delta_3)$. But applying \textbf{(ii)} to $(v_1,v_2,v_3,v_4)$ gives $\phi(\delta_1,\delta_3)\ne\phi(\delta_2,\delta_3)$, a contradiction.

\medskip

\textbf{Case 2.~} $i=3$, that is $\delta(P)=(\delta_1>\delta_2>\delta_3<\delta_4)$.

Since $(v_1,v_2,v_3,v_4)\in E(H[P])$ and $\delta(v_1,v_2,v_3,v_4)=(\delta_1>\delta_2>\delta_3)$, \textbf{(i)} gives $\phi(\delta_1,\delta_2)=\phi(\delta_2,\delta_3)\ne\phi(\delta_1,\delta_3)$. 
If $\delta_2>\delta_4$, then \textbf{(ii)} applied to the edge $(v_1,v_3,v_4,v_5)$ gives $\phi(\delta_3,\delta_4)\ne\phi(\delta_1,\delta_3)$, since $\delta(v_1,v_3,v_4,v_5)=(\delta_1>\delta_3<\delta_4)$ and $\delta_1>\delta_4$. Similarly, applying \textbf{(ii)} to $(v_2,v_3,v_4,v_5)$ gives $\phi(\delta_3,\delta_4)\ne\phi(\delta_2,\delta_3)$, which implies $\phi(\delta_3,\delta_4)=\phi(\delta_1,\delta_3)$, a contradiction. 
If $\delta_1<\delta_4$, then \textbf{(iii)} applied to $(v_1,v_3,v_4,v_5)$ gives $\phi(\delta_3,\delta_4)=\phi(\delta_1,\delta_3)$. Similarly, applying \textbf{(iii)} to $(v_2,v_3,v_4,v_5)$ gives $\phi(\delta_3,\delta_4)=\phi(\delta_2,\delta_3)$, contradicting $\phi(\delta_2,\delta_3)\ne\phi(\delta_1,\delta_3)$. 
Hence $\delta_1>\delta_4>\delta_2$. Now $\delta(v_1,v_2,v_4,v_5)=(\delta_1>\delta_2<\delta_4)$ and $\delta_1>\delta_4$, so \textbf{(ii)} gives $\phi(\delta_2,\delta_4)\ne\phi(\delta_1,\delta_2)$. On the other hand, applying \textbf{(iii)} to $(v_2,v_3,v_4,v_5)$ gives $\phi(\delta_2,\delta_4)=\phi(\delta_2,\delta_3)$. Since $\phi(\delta_1,\delta_2)=\phi(\delta_2,\delta_3)$, this is a contradiction.

\medskip

This completes the proof that $H$ is $K_5^{(4)}$-free.

\subsection{$\alpha(H)<2^{7}n^{7}+1$}
Now we show that $\alpha(H)<2^{7}n^{7}+1$. Suppose to the contrary that there is a set $Q=\langle v_1,v_2,\ldots ,v_m\rangle$ of $m=2^{7}n^{7}+1$ vertices that induces an independent set in $H$. Recall that $\delta_i=\delta(v_i,v_{i+1})$, and hence $\delta(Q)=(\delta_{i})_{i=1}^{m-1}$.

\begin{lemma}\label{no-mono-n}
There is no monotone subsequence $(\delta_{i_\ell})_{\ell=1}^n\subset (\delta_j)_{j=1}^{m-1}$ such that for any $a,b,c \in [n]$ with $a<b<c$, there exists $\{u_1,\ldots,u_4\}\subset\{v_1,\ldots,v_m\}$ such that $\delta(u_1,\ldots,u_4)=(\delta_{i_a},\delta_{i_b},\delta_{i_c})$.
\end{lemma}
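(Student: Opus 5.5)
Suppose, to the contrary, that such a monotone subsequence $(\delta_{i_\ell})_{\ell=1}^n$ exists. The plan is to apply Lemma~\ref{phi} to the set of $\delta$-values and then use condition \textbf{(i)} to exhibit an edge of $H$ inside $Q$.

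First, I will check that the values $\delta_{i_1},\ldots,\delta_{i_n}$ are pairwise distinct. This holds because the subsequence is monotone, which I read as strictly increasing or strictly decreasing, in line with the paper's convention for monotone sequences. So $A=\{\delta_{i_1},\ldots,\delta_{i_n}\}$ is an $n$-subset of $U=\{0,1,\ldots,\lfloor 2^{c_0n}\rfloor-1\}$; recall that $\delta$-values lie in $\{0,\ldots,D-1\}$ with $D=\lfloor 2^{c_0n}\rfloor$. Lemma~\ref{phi} then gives three elements $x<y<z$ of $A$ with
\[
\phi(x,y)=\phi(y,z)\neq\phi(x,z).
\]

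Second, I will identify these with indices $a<b<c$ in $[n]$. If the subsequence is increasing, then $x=\delta_{i_a}$, $y=\delta_{i_b}$, $z=\delta_{i_c}$. If it is decreasing, then $x=\delta_{i_c}$, $y=\delta_{i_b}$, $z=\delta_{i_a}$. The coloring $\phi$ is defined on unordered pairs, so the condition $\phi(x,y)=\phi(y,z)\ne\phi(x,z)$ is symmetric under reversing $x$ and $z$. In either case it therefore reads
\[
\phi(\delta_{i_a},\delta_{i_b})=\phi(\delta_{i_b},\delta_{i_c})\neq\phi(\delta_{i_a},\delta_{i_c}).
\]

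Third, I will use the hypothesis of the lemma to choose $u_1<u_2<u_3<u_4$ in $Q$ with $\delta(u_1,\ldots,u_4)=(\delta_{i_a},\delta_{i_b},\delta_{i_c})$. This triple is a monotone sequence, and its colors satisfy exactly the relation in \textbf{(i)}. Hence $\langle u_1,u_2,u_3,u_4\rangle\in E(H)$, which contradicts the fact that $Q$ is independent.

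None of these steps is a serious obstacle. The only points needing care are that the elements of $A$ are distinct, so that $|A|=n$ and Lemma~\ref{phi} applies, and the symmetry argument in the decreasing case. The real work, namely producing such a subsequence together with the realizing $4$-tuples (for instance via Property~IV or a multi-layer extremum analysis), belongs to the later steps that use this lemma, not to the lemma itself.
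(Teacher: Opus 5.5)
Your proposal is correct and follows essentially the same argument as the paper: apply Lemma~\ref{phi} to the $n$ distinct values $\delta_{i_1},\ldots,\delta_{i_n}$, realize the resulting good triple through the hypothesized $4$-tuple, and use \textbf{(i)} to get an edge inside the independent set $Q$. Your explicit treatment of the decreasing case, using the symmetry of $\phi$ on unordered pairs, simply spells out what the paper covers with ``without loss of generality.''
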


\noindent\emph{Proof of Lemma \ref{no-mono-n}.~}
Without loss of generality, suppose to the contrary that $(\delta_{i_\ell})_{\ell=1}^n$ is such a monotone increasing subsequence. It follows from Lemma \ref{phi} that there is a $3$-tuple $\delta_{i_a},\delta_{i_b},\delta_{i_c}$ with $\delta_{i_a}<\delta_{i_b}<\delta_{i_c}$ such that $$\phi(\delta_{i_a},\delta_{i_b})=\phi(\delta_{i_b},\delta_{i_c})\neq\phi(\delta_{i_a},\delta_{i_c}).$$
Since there exists $\{u_1,\ldots,u_4\}\subset \{v_1,\ldots,v_m\}$ such that $\delta(u_1,\ldots,u_4)=(\delta_{i_a},\delta_{i_b},\delta_{i_c})$ from the assumption, $\{u_1,\ldots,u_4\}$ forms an edge by \textbf{(i)}, a contradiction.\hfill$\Box$
\medskip

Let $\beta_i=\frac{m-1}{(2n)^i}$, for $i\in[0,7]$. Since $m-1=(2n)^7$, each $\beta_i$ is an integer.
For $t\in[7]$, we will greedily construct \emph{$t$-layer local maxima sequences} $\Delta^{(t)}$ such that $\Delta^{(t)}\subset\Delta^{(t-1)}$, starting with $\Delta^{(0)}=\delta(Q)$, and such that the following property holds. (We do not require that the elements of $\Delta^{(t)}$ be distinct.)

\begin{itemize}
\item[($\ast$)] For two consecutive elements $\delta_a$, $\delta_b\in\Delta^{(t)}$, we have $\delta_x<\max\{\delta_a ,\delta_b\}$ for all $a<x<b$, and hence $\delta_a\neq\delta_b$.
\end{itemize}

For $t\ge1$, assume now that we have obtained $\Delta^{(t-1)}$ satisfying the desired property. We restrict our attention to $\Delta^{(t-1)}$ and we will find $\Delta^{(t)}$ to be the first $\beta_t$ local maxima (with respect to  $\Delta^{(t-1)}$) as follows. For convenience, we abbreviate ``with respect to'' as ``w.r.t.'' in the following. We claim first that there is no monotone consecutive subsequence of length $n$. Otherwise, suppose such a subsequence $Q'$ exists. Without loss of generality, we assume that $Q'$ is increasing. For any $\delta_{j_1},\delta_{j_2},\delta_{j_3}\in Q'$ with $j_1<j_2<j_3$, we have $\delta(v_{j_1},v_{j_1+1},v_{j_2+1},v_{j_3+1})=(\delta_{j_1}<\delta_{j_2}<\delta_{j_3})$ by noting the first part of the property ($\ast$) of $\Delta^{(t-1)}$ and Property II, which contradicts Lemma \ref{no-mono-n}. It follows from the second part of the property ($\ast$) of $\Delta^{(t-1)}$ and Fact \ref{fact} that we can set $\Delta^{(t)}$ to be the first $\frac{\beta_{t-1}}{2n}=\beta_t$ local maxima (w.r.t. $\Delta^{(t-1)}$). Therefore, $\Delta^{(t)}\subset \Delta^{(t-1)}$.

To show the  property ($\ast$) for $\Delta^{(t)}$,  we consider two consecutive elements $\delta_a$, $\delta_b\in\Delta^{(t)}$ and  we may assume that $\delta_{a},\delta_{i_1},\delta_{i_2},\ldots,\delta_{i_j},\delta_{b}$ are consecutive elements in $\Delta^{(t-1)}$. Note that $\delta_{a}$ and $\delta_{b}$ are consecutive local maxima (w.r.t. $\Delta^{(t-1)}$). We have $\delta_{i_\ell}<\max\{\delta_a, \delta_b\}$ for $\ell\in[j]$. Furthermore, it follows from the inductive hypothesis that $\delta_x<\max\{\delta_{i_\ell},\delta_{i_{\ell+1}}\}$ for all $i_\ell<x<i_{\ell+1}$ and $\ell\in[j-1]$, then $\delta_x<\max\{\delta_{i_\ell},\delta_{i_{\ell+1}}\}<\max\{\delta_{a},\delta_{b}\}$. Thus, $\delta_x<\max\{\delta_{a},\delta_{b}\}$ for all $a<x<b$. Moreover, Property I implies that $\delta_a\neq\delta_b$, as desired. Otherwise $\delta(v_a,v_b)=\delta_a=\delta_b=\delta(v_b,v_{b+1})$, a contradiction.
\medskip

Note that $\delta_j$ is a local maximum (w.r.t. $\Delta^{(t-1)}$) for $t\in[7]$ and $\delta_j\in\Delta^{(t)}\backslash\Delta^{(t+1)}$, where $\Delta^{(8)}=\emptyset$. We always let $\delta_{j^-}$ and $\delta_{j^+}$ be the \textbf{closest} element to the left and right of $\delta_j$ in the sequence $\Delta^{(t-1)}$, respectively. Thus,
$\delta_{j^-},\delta_{j^+}<\delta_{j}$. In particular, $\delta_{j^-},\delta_{j^+}\in \Delta^{(t-1)}\setminus\Delta^{(t)}$. From the above greedy construction, we can obtain the following observation by repeatedly using ($\ast$).

\begin{observation}\label{observation-1}
For $t\in [7]$ and  $\delta_j\in \Delta^{(t)}\backslash\Delta^{(t+1)}$,
we have $\delta_x<\delta_{j}$ for each $x\in[j^-,j^+]\setminus\{j\}$.
\end{observation}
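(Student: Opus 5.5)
Fix $t\in[7]$ and $\delta_j\in\Delta^{(t)}\setminus\Delta^{(t+1)}$. The plan is to show that every $\delta_x$ with $x\in[j^-,j]$ and $x\neq j$ is smaller than $\delta_j$; the interval $[j,j^+]$ is handled symmetrically. The tool is the first part of property ($\ast$) for $\Delta^{(t-1)}$, combined with the fact that $\delta_j$ is a local maximum w.r.t. $\Delta^{(t-1)}$.

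\textbf{Endpoints.} Since $\delta_j\in\Delta^{(t)}$, it is one of the selected local maxima of $\Delta^{(t-1)}$. Its closest neighbours $\delta_{j^-},\delta_{j^+}$ in $\Delta^{(t-1)}$ therefore satisfy $\delta_{j^-},\delta_{j^+}<\delta_j$, as already noted before the statement. If $\delta_j$ has no left or right neighbour in $\Delta^{(t-1)}$, the corresponding side is vacuous.

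\textbf{Interior points.} Now take $j^-<x<j$. The elements $\delta_{j^-}$ and $\delta_j$ are consecutive in $\Delta^{(t-1)}$, and ($\ast$) holds for $\Delta^{(t-1)}$. This holds because ($\ast$) was established inductively for every $\Delta^{(s)}$, with the base case $\Delta^{(0)}=\delta(Q)$, where consecutive elements have no indices strictly between them. Applying ($\ast$) gives
\[
\delta_x<\max\{\delta_{j^-},\delta_j\}=\delta_j .
\]
The same argument applied to the consecutive pair $\delta_j,\delta_{j^+}$ gives $\delta_x<\delta_j$ for all $j<x<j^+$.

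\textbf{Conclusion and remaining care.} Combining the two cases gives $\delta_x<\delta_j$ for all $x\in[j^-,j^+]\setminus\{j\}$. The "repeated use of ($\ast$)" mentioned in the paper is just the induction by which ($\ast$) propagates from $\Delta^{(s-1)}$ to $\Delta^{(s)}$, and that induction was already carried out in the construction. So there is no real obstacle here. The only points needing care are confirming that ($\ast$) is available at level $t-1$ for every $t\in[7]$ (including $t=1$), and that $\delta_{j^\pm}<\delta_j$ follows from $\delta_j$ being a local maximum rather than from some later layer.
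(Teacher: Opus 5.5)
Your proof is correct and is essentially the paper's argument. Local maximality of $\delta_j$ in $\Delta^{(t-1)}$ gives $\delta_{j^-},\delta_{j^+}<\delta_j$, and the first part of ($\ast$) for $\Delta^{(t-1)}$, applied to the consecutive pairs $\delta_{j^-},\delta_j$ and $\delta_j,\delta_{j^+}$, gives $\delta_x<\delta_j$ for all strictly interior indices. Your remark about a possibly missing neighbour is unnecessary but harmless: a local maximum by definition has neighbours on both sides.
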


Note that $|\Delta^{(7)}|=\beta_7=1$. Let $\Delta^{(7)}=\{\delta_a\}$. Define $\delta_{b_1}=\delta_{a^-}\in\Delta^{(6)}$, $\delta_{b_2}=\delta_{b_1^+}\in\Delta^{(5)}$ and $\delta_{b_3}=\delta_{b_2^+}\in\Delta^{(4)}$. 
Then $b_1<b_2<b_3<a$ and $\delta_{b_3}<\delta_{b_2}<\delta_{b_1}<\delta_a$. By the pigeonhole principle, there exist distinct $i,j\in[3]$ such that $\phi(\delta_{b_i},\delta_a)=\phi(\delta_{b_j},\delta_a)$. We give the details for the case
$\phi(\delta_{b_1},\delta_a)=\phi(\delta_{b_3},\delta_a)$. 
The other two cases 
are handled in exactly the same way: if 
$\phi(\delta_{b_i},\delta_a)=\phi(\delta_{b_j},\delta_a)$ with $i<j$, then one replaces 
$b_1$ and $b_3$ below by $b_i$ and $b_j$, respectively, and starts the subsequent 
construction from $b_j$. Now we let $\delta_c=\delta_{b_3^-}\in \Delta^{(3)}$, $\delta_d=\delta_{c^+}\in \Delta^{(2)}$, $\delta_e=\delta_{d^-}\in \Delta^{(1)}$ and $\delta_f=\delta_{e^+}\in \delta(Q)$. Recall that $Q=\langle v_1,v_2,\ldots ,v_m\rangle$ induces an independent set in $H$.

\begin{claim}\label{ind-1}
For every $c\le x<b_3$, we have $\phi(\delta_x,\delta_{b_3})\ne\phi(\delta_{b_3},\delta_a)$.
\end{claim}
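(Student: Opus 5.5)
The plan is to argue by contradiction. Fix $x$ with $c\le x<b_3$ and suppose $\phi(\delta_x,\delta_{b_3})=\phi(\delta_{b_3},\delta_a)=:R$; write $B$ for the other colour. I will exhibit four $4$-tuples of $Q$ whose $\delta$-sequences consist only of the values $\delta_{b_1},\delta_x,\delta_{b_3},\delta_a$. Three of them, being non-edges, force colours, and the fourth is then an edge of $H$, contradicting the independence of $Q$.

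\textbf{Preliminary orderings.} First I record the relevant orderings and maxima, all from Observation~\ref{observation-1} and ($\ast$).
\begin{itemize}
\item[(a)] Since $b_2\in\Delta^{(3)}$ lies left of $b_3$ and $c=b_3^-$ in $\Delta^{(3)}$, we get $b_1<b_2\le c\le x<b_3<a$.
\item[(b)] For $y\in[c,b_3)$ we have $\delta_y<\delta_{b_3}$, by Observation~\ref{observation-1} applied to $b_3\in\Delta^{(4)}\setminus\Delta^{(5)}$.
\item[(c)] For $y\in(b_1,b_3)$ we have $\delta_y<\delta_{b_1}$. On $(b_1,b_2]$ this follows from the observation for $b_1$, whose right neighbour in $\Delta^{(5)}$ is $b_2$. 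On $(b_2,b_3)$ it follows from the observation for $b_2$, whose right neighbour in $\Delta^{(4)}$ is $b_3$, combined with $\delta_{b_2}<\delta_{b_1}$.
\item[(d)] For $y\in[b_1,a)$ we have $\delta_y<\delta_a$, by the observation for $a$, since $b_1=a^-$.
\end{itemize}
With Property II these give
\[
\delta(v_x,v_{x+1})=\delta_x,\quad \delta(v_{x+1},v_{b_3+1})=\delta_{b_3},\quad \delta(v_{b_3+1},v_{a+1})=\delta_a,\quad \delta(v_{x+1},v_{a+1})=\delta_a,
\]
\[
\delta(v_{b_1},v_x)=\delta(v_{b_1},v_{b_3})=\delta_{b_1}.
\]
When $x=b_3-1$ the pair $v_{x+1},v_{b_3+1}$ is simply $v_{b_3},v_{b_3+1}$.

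\textbf{The chain of non-edges.}
\begin{itemize}
\item[(1)] The tuple $\langle v_x,v_{x+1},v_{b_3+1},v_{a+1}\rangle$ has $\delta=(\delta_x<\delta_{b_3}<\delta_a)$. Since it is not an edge under \textbf{(i)}, $\phi(\delta_x,\delta_a)=R$.
\item[(2)] The tuple $\langle v_{b_1},v_x,v_{x+1},v_{a+1}\rangle$ has $\delta=(\delta_{b_1}>\delta_x<\delta_a)$ with $\delta_{b_1}<\delta_a$. Here $\phi(\delta_{b_1},\delta_a)=\phi(\delta_{b_3},\delta_a)=R$ by the pigeonhole choice, and $\phi(\delta_x,\delta_a)=R$ by (1). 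Since the tuple is not an edge under \textbf{(iii)}, $\phi(\delta_{b_1},\delta_x)=B$.
\item[(3)] The tuple $\langle v_{b_1},v_{b_3},v_{b_3+1},v_{a+1}\rangle$ has $\delta=(\delta_{b_1}>\delta_{b_3}<\delta_a)$ with $\delta_{b_1}<\delta_a$. Both $\phi(\delta_{b_1},\delta_a)$ and $\phi(\delta_{b_3},\delta_a)$ equal $R$, so by \textbf{(iii)} non-edgeness gives $\phi(\delta_{b_1},\delta_{b_3})=B$.
\item[(4)] The tuple $\langle v_{b_1},v_x,v_{x+1},v_{b_3+1}\rangle$ has $\delta=(\delta_{b_1}>\delta_x<\delta_{b_3})$ with $\delta_{b_1}>\delta_{b_3}$. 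Here $\phi(\delta_{b_1},\delta_x)=\phi(\delta_{b_1},\delta_{b_3})=B\ne R=\phi(\delta_x,\delta_{b_3})$, so it is an edge by \textbf{(ii)}. This is the contradiction.
\end{itemize}

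\textbf{Main obstacle.} The step I expect to need the most care is justifying the maxima in (c) and (d). These rely on the nesting $b_1<b_2\le c\le x<b_3<a$ and on chaining Observation~\ref{observation-1} across layers $6,5,4$. I will check these index relations explicitly from the definitions of $a^-$, $b_1^+$ and $b_2^+$ before running the colour argument. For the other pigeonhole cases the same four tuples work, with $b_i,b_j$ in place of $b_1,b_3$.
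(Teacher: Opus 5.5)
Your proposal is correct and matches the paper's proof: you use the same four tuples $\langle v_x,v_{x+1},v_{b_3+1},v_{a+1}\rangle$, $\langle v_{b_1},v_x,v_{x+1},v_{a+1}\rangle$, $\langle v_{b_1},v_{b_3},v_{b_3+1},v_{a+1}\rangle$ and $\langle v_{b_1},v_x,v_{x+1},v_{b_3+1}\rangle$ with rules \textbf{(i)}, \textbf{(iii)}, \textbf{(iii)}, \textbf{(ii)}, and only the order of the two \textbf{(iii)} steps differs from the paper. Your explicit check of the nesting $b_1<b_2\le c\le x<b_3<a$ and of the relevant maxima via Observation~\ref{observation-1} and Property~II fills in details the paper leaves implicit.
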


\noindent\textit{Proof of Claim~\ref{ind-1}.~}
Suppose, to the contrary, that $\phi(\delta_x,\delta_{b_3})=\phi(\delta_{b_3},\delta_a)$ for some $c\le x<b_3$. We first claim that $\phi(\delta_x,\delta_a)=\phi(\delta_{b_3},\delta_a)$. Indeed, otherwise $\phi(\delta_x,\delta_{b_3})=\phi(\delta_{b_3},\delta_a)\ne\phi(\delta_x,\delta_a)$, which together with Observation \ref{observation-1} and \textbf{(i)} would imply that $(v_x,v_{x+1},v_{b_3+1},v_{a+1})\in E(H[Q])$, a contradiction.

Next, $\phi(\delta_{b_1},\delta_{b_3})\ne\phi(\delta_{b_3},\delta_a)$. Otherwise, since $\phi(\delta_{b_1},\delta_a)=\phi(\delta_{b_3},\delta_a)$, all three colors $\phi(\delta_{b_1},\delta_a)$, $\phi(\delta_{b_3},\delta_a)$ and $\phi(\delta_{b_1},\delta_{b_3})$ would be the same, which together with Observation \ref{observation-1} and \textbf{(iii)} would imply $(v_{b_1},v_{b_3},v_{b_3+1},v_{a+1})\in E(H[Q])$, a contradiction. Similarly, \textbf{(iii)} applied to $(v_{b_1},v_x,v_{x+1},v_{a+1})$ gives $\phi(\delta_{b_1},\delta_x)\ne\phi(\delta_x,\delta_a)=\phi(\delta_{b_1},\delta_a)$.

Since $\phi$ is a two-coloring, we get $\phi(\delta_{b_1},\delta_x)=\phi(\delta_{b_1},\delta_{b_3})\ne\phi(\delta_x,\delta_{b_3})$. \textbf{(ii)} then implies that $(v_{b_1},v_x,v_{x+1},v_{b_3+1})\in E(H[Q])$, a contradiction.
\hfill$\Box$

Without loss of generality, let $\phi(\delta_{b_3},\delta_a)=\text{red}$. Since $\phi$ is a two-coloring, Claim~\ref{ind-1} implies that $\phi(\delta_z,\delta_{b_3})=\text{blue}$ for every $z\in\{c,d,e,f\}$. 
Applying \textbf{(iii)} to $(v_c,v_d,v_{d+1},v_{b_3+1})$, the independence of $Q$ forces $\phi(\delta_c,\delta_d)=\text{red}$. The same argument applied to $(v_c,v_e,v_{e+1},v_{b_3+1})$, $(v_c,v_f,v_{f+1},v_{b_3+1})$ and $(v_e,v_f,v_{f+1},v_{b_3+1})$ gives $\phi(\delta_c,\delta_e)=\text{red}$, $\phi(\delta_c,\delta_f)=\text{red}$ and $\phi(\delta_e,\delta_f)=\text{red}$. Hence $\phi(\delta_c,\delta_d)=\phi(\delta_c,\delta_e)=\phi(\delta_c,\delta_f)=\phi(\delta_e,\delta_f)=\text{red}$. 
Next, applying \textbf{(ii)} to $(v_c,v_e,v_{e+1},v_{d+1})$ and $(v_c,v_f,v_{f+1},v_{d+1})$, respectively, the independence of $Q$ forces $\phi(\delta_e,\delta_d)=\phi(\delta_f,\delta_d)=\text{red}$. 
Thus $\phi(\delta_e,\delta_f)=\phi(\delta_e,\delta_d)=\phi(\delta_f,\delta_d)=\text{red}$. Since $\delta(v_e,v_f,v_{f+1},v_{d+1})=(\delta_e>\delta_f<\delta_d)$ and $\delta_e<\delta_d$, \textbf{(iii)} implies that $(v_e,v_f,v_{f+1},v_{d+1})\in E(H[Q])$, a contradiction.

This completes the proof of $\alpha(H)<2^7n^7+1$.

\medskip

Combining the two preceding subsections, we have constructed a $4$-graph $H$ on 
$N=2^{\lfloor 2^{c_0n}\rfloor}$ vertices such that $H$ is $K_5^{(4)}$-free and 
$\alpha(H)<2^7n^7+1$. Therefore 
$r_4(5,2^7n^7+1)>2^{\lfloor 2^{c_0n}\rfloor}$. Recall that $c_0>0$ is a sufficiently small absolute constant. Consequently, there exists an absolute constant $c>0$ 
such that $r_4(5,n)\ge 2^{2^{cn^{1/7}}}$ for all $n\ge 5$. This completes the proof of 
Theorem~\ref{center-2}.

\end{spacing}
\end{document}